\newtheorem{theorem}{Theorem}
\newtheorem{proposition}[theorem]{Proposition}
\newtheorem{claim}{Claim}
\begin{document}

\title{Arc index of spatial graphs}
   
\author[M. Lee]{Minjung Lee} 
\address{Department of Mathematics, Korea University, Seoul 02841, Korea}
\email{mjmlj@korea.ac.kr}
\author[S. No]{Sungjong No}
\address{Department of Statistics, Ewha Womans University, Seoul 03760, Korea   }
\email{sungjongno84@gmail.com}
\author[S. Oh]{Seungsang Oh}
\address{Department of Mathematics, Korea University, Seoul 02841, Korea}
\email{seungsang@korea.ac.kr}

\thanks{Mathematics Subject Classification 2010: 57M25 57M27}
\thanks{The second author was supported by the BK21 Plus Project through the National Research Foundation of 
Korea (NRF) grant funded by the Korean Ministry of Education (22A20130011003).}
\thanks{The corresponding author(Seungsang Oh) was supported by the National Research Foundation of Korea(NRF) grant funded by the Korea government(MSIP) (No. NRF-2017R1A2B2007216).}

\maketitle

\begin{abstract}
Bae and Park found an upper bound on the arc index of prime links
in terms of the minimal crossing number. 
In this paper, we extend the definition of the arc presentation to spatial graphs and 
find an upper bound on the arc index $\alpha (G)$ of any spatial graph $G$ as
$$\alpha(G) \leq c(G)+e+b,$$
where $c(G)$ is the minimal crossing number of $G$, 
$e$ is the number of edges, 
and $b$ is the number of bouquet cut-components.
This upper bound is lowest possible.
\end{abstract}

\section{Introduction} \label{sec:intro}

A graph is a set of vertices connected by edges allowing loops and multiple edges.
A graph embedded in $\mathbb{R}^3$ is called a {\em spatial graph\/}.
We consider two spatial graphs to be the same if they are equivalent under ambient isotopy.
Specifically a spatial graph which consists of one vertex and nonempty loops is called a {\em bouquet\/}. 
Note that a knot is a spatial graph consisting of a vertex and a loop,
and a link is a disjoint union of knots. 

There is an open-book decomposition of $\mathbb{R}^3$ which has open
half-planes as pages and the standard $z$-axis as the binding axis.
Every link $L$ can be embedded in an open-book decomposition
with finitely many pages so that it meets each page in exactly one simple arc 
with two different end-points on the binding axis.
Such an embedding is called an {\em arc presentation\/} of $L$.

Birman and Menasco~\cite{BM} used the idea of arc presentations 
to find braid presentations for reverse-string satellites.
Cromwell~\cite{Cr} used the term arc index $\alpha(L)$ to denote 
the minimum number of arcs to make an arc presentation of a link $L$. 
Later, Cromwell and Nutt~\cite{CN} conjectured the following upper bound 
on the arc index in terms of the minimal crossing number
which was proved by Bae and Park~\cite{BP}.
Note that Jin and Park~\cite{JP} improved the result for non-alternating links.

\begin{theorem} [Bae-Park] \label{thm:bp} 
Let $L$ be any prime link. 
Then $\alpha(L) \leq c(L)+2$. 
Moreover this inequality is strict if and only if $L$ is not alternating.
\end{theorem}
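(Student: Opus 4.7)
The plan is to exploit the well-known correspondence between arc presentations and grid diagrams: an arc presentation with $n$ arcs can be redrawn as an $n \times n$ grid diagram whose rows and columns each contain exactly two markers, joined by one horizontal segment per row and one vertical segment per column. So it suffices to produce, from a minimal crossing diagram $D$ of $L$, a grid diagram of size $(c(L)+2) \times (c(L)+2)$ representing $L$.

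Starting from $D$, I would isotope each crossing neighborhood into a ``$+$''-shaped piece occupying a single grid cell, and connect the resulting pieces by horizontal and vertical arcs running along the grid lines. The key combinatorial step is to order the crossings $x_{1},\ldots,x_{c(L)}$ and to assign them to cells so that the connecting arcs reuse grid lines as much as possible. Using the planar tree structure dual to $D$ together with primality (which rules out separating simple closed curves meeting $D$ in fewer than four points), this ordering can be chosen so that exactly $c(L)+2$ distinct horizontal lines suffice, and by symmetry the same for vertical lines. This gives the upper bound $\alpha(L) \leq c(L)+2$ for all prime $L$.

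For the ``strict if and only if non-alternating'' statement, I would combine two ingredients. In the alternating direction, the Kauffman polynomial breadth yields the matching lower bound $\alpha(L) \geq c(L)+2$ for alternating prime links, forcing equality. In the non-alternating direction, I would use the fact that any minimal crossing diagram of a non-alternating prime link must contain a local region where the over/under pattern fails to alternate; near such a region a small local modification of the grid construction merges two parallel connecting arcs into one, saving at least one grid line, yielding $\alpha(L) \leq c(L)+1$.

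The main obstacle is the global combinatorial step of arranging the crossings so that exactly $c(L)+2$ grid lines suffice; this is not a purely local construction but an ordering argument that uses primality in an essential way, typically carried out by induction on $c(L)$ together with a case analysis of how crossings attach to the existing partial grid. The subtle point in the non-alternating case is to verify that the local modification actually eliminates a line without forcing a new one to be introduced elsewhere, which requires tracking how neighboring grid pieces interact globally rather than just at the modified spot.
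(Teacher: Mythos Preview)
This theorem is quoted in the paper as a known result (proved in \cite{BP}, with the non-alternating strict inequality due to \cite{JP}); the paper does not give an independent proof of it. However, the paper's spoking algorithm in Sections~\ref{sec:graphspoke}--\ref{sec:spoking} \emph{is} Bae and Park's method, lifted from links to spatial graphs, so that is the approach against which your proposal should be compared.

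The Bae--Park argument is not a grid--placement construction. One fixes a point $v_0$ on a minimal reduced diagram $D_0$ and then repeatedly ``pulls'' an edge of the underlying plane graph into $v_0$, converting crossings into spokes one at a time. The crucial technical step is a dual--graph lemma (Claim~\ref{claim:cutpointfree} here, Lemma~1 in \cite{BP}): at each stage one can choose the pulling edge so that the intermediate diagram stays \emph{cut-point free}. Once that is secured, the spoke count at the end is read off from an Euler-characteristic identity on the initial diagram, giving $r(D_0)+v(G)-2=c(L)+2$ for a knot. So the actual proof is an inductive edge-contraction together with a planarity lemma about avoiding cut vertices; primality enters exactly as the hypothesis that $D_0$ has no cut vertex to begin with.

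Your proposal differs in strategy (place crossings as cells in a grid and route connecting arcs), which is a reasonable heuristic, but the key step is only asserted: ``this ordering can be chosen so that exactly $c(L)+2$ distinct horizontal lines suffice, and by symmetry the same for vertical lines.'' You yourself flag this as ``the main obstacle,'' and indeed it is: nothing in your outline explains \emph{how} primality forces the arcs to reuse lines down to exactly $c(L)+2$, nor why the horizontal and vertical constraints can be satisfied \emph{simultaneously} (the ``by symmetry'' does not follow, since optimizing rows may spoil columns). In Bae--Park the corresponding control is exactly the cut-point-free lemma, proved via the dual graph; without an analogue of that lemma, the grid construction is a hope rather than a proof. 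Your treatment of the equality case is on firmer ground: the Morton--Beltrami lower bound via the breadth of the Kauffman polynomial does give $\alpha(L)\ge c(L)+2$ for prime alternating $L$. For the non-alternating strict inequality your ``local merge'' sketch is again only heuristic; in the literature this is the content of \cite{JP}, and it requires a careful argument that the saving is genuinely global.
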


In this paper, we extend the definition of arc presentation to spatial graphs.
An {\em arc presentation\/} of a spatial graph $G$ is defined in the same manner 
as an arc presentation of a link.
In this case the binding axis contains all vertices of $G$ and finitely many points from the interiors of edges of $G$,
and each edge of $G$ may pass from one page to another across the binding axis.
The $m$ points on the binding axis are numbered in order $1, 2, \dots, m$.

As a family of simple spatial graphs, a {\em $\theta_p$-curve} ($p \! \geq \! 3$) is a spatial graph 
which consists two vertices and $p$ edges connecting them. 
In particular a $\theta_3$-curve is a $\theta$-curve.
Figure~\ref{fig:arcpresentation} shows an arc presentation of the $\theta$-curve $5_1$
which is listed in~\cite{Mo}.
The {\em arc index\/} $\alpha(G)$ is defined to be the minimal number of
pages among all possible arc presentations of $G$.

\begin{figure}[h]
\includegraphics{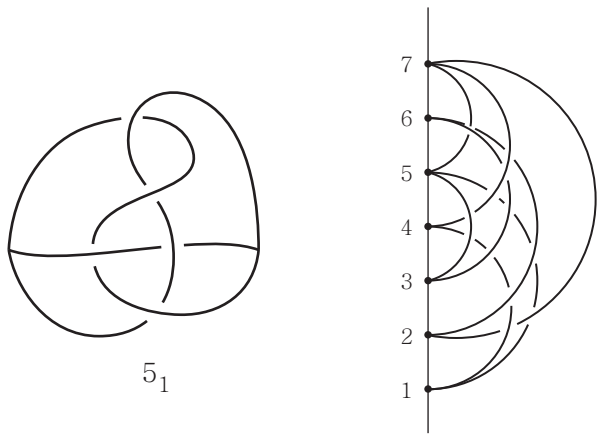}
\caption{Arc presentation of the $\theta$-curve $5_1$}
\label{fig:arcpresentation}
\end{figure}

We consider two types of 2-spheres that separate $G$ into two parts.
Such a 2-sphere is called a splitting-sphere if it does not meet $G$,
and a cut-sphere if it intersects $G$ in a single vertex, which is called a cut vertex.
We maximally decompose $G$ into {\em cut-components\/} by cutting $G$ 
along a maximal set $\mathcal{S}$ of splitting-spheres and cut-spheres 
where any two spheres are either disjoint or intersect each other in exactly one cut vertex.
In particular, if such a cut-component is itself a bouquet spatial graph,
we call it a {\em bouquet cut-component\/}.

As in knot theory, 
the crossing number $c(G)$ of the spatial graph $G$ is the minimal number of double points 
in any generic projection of $G$ into the plane $\mathbb{R}^2 \subset \mathbb{R}^3$.
Note that these double points must be away from the projected vertices of $G$.

The purpose of this paper is to verify the spatial graph version of Theorem~\ref{thm:bp}.

\begin{theorem} \label{thm:main}
Let $G$ be any spatial graph with $e$ edges and $b$ bouquet cut-components. 
Then 
$$\alpha(G) \leq c(G)+e+b.$$
Furthermore, this is the lowest possible upper bound.
\end{theorem}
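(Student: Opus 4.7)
My plan is to reduce the inequality to the case of a single cut-component by additivity over the splitting- and cut-sphere decomposition, and to treat the single-component case via a construction extending Bae--Park.

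First, observe that $c$, $e$, and $b$ are all additive across the decomposition of $G$ along the maximal family $\mathcal{S}$ into cut-components $G_1,\ldots,G_k$. Arc presentations of the $G_i$ combine into an arc presentation of $G$ by juxtaposing their pages around a common binding axis at disjoint sets of angles, and placing any shared cut-vertex at a single common point of the axis. A count of arc-endpoints ($\sum d_v + 2I = 2(\text{pages})$) shows that the resulting presentation has $\alpha(G_1)+\cdots+\alpha(G_k)$ pages, yielding $\alpha(G)\leq\sum_i\alpha(G_i)$ and reducing the theorem to the single-cut-component case, in which $b\in\{0,1\}$ with $b=1$ iff $G$ is a bouquet.

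For a single cut-component $G$, I would start with a minimum-crossing planar diagram $D$ and adapt the Bae--Park construction to convert $D$ into an arc presentation whose page-count is bounded by the number of regions of $D$. Viewing $D$ as a plane graph with $V$ graph-vertices and $c$ four-valent crossings, Euler's formula gives
\[
|F(D)|=c(G)+e-V+2.
\]
In the bouquet case ($V=1$, $b=1$) this equals $c+e+1=c+e+b$, while in the non-bouquet case ($V\geq 2$, $b=0$) it is at most $c+e=c+e+b$, so the bound $\alpha(G)\leq c(G)+e+b$ follows. Observe that this recovers the Bae--Park bound $c+2$ exactly when $G$ is a prime knot ($V=1$, $b=1$, $e=1$) or a prime two-vertex link graph ($V=2=e$, $b=0$).

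The main obstacle is verifying that the Bae--Park argument adapts at vertices of arbitrary valence: near a crossing the counts reduce to the link case, but each graph-vertex of valence $d$ must be placed on the binding axis with its $d$ incident arcs distributed coherently among pages, without introducing more than the expected number of axis-crossings. For the sharpness claim I would exhibit explicit families realizing equality. The bouquet of $n$ pairwise unlinked trivial loops is a simple one: $c=0$, $e=n$, and cutting at the central vertex yields $n$ bouquet cut-components, so $b=n$ and the bound equals $2n$; the matching lower bound $\alpha\geq 2n$ follows because each loop must contribute at least two arcs, as an arc cannot have both endpoints at the same axis point. The theta-curve $5_1$ of Figure~\ref{fig:arcpresentation} (with $c=5$, $e=3$, $b=0$) provides another example, realizing $\alpha=8$.
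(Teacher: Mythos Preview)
Your overall architecture---reduce to cut-components by additivity, then handle a single cut-component by an Euler count combined with a Bae--Park style construction---matches the paper's. The gap is in the target of your construction: you assert that the adapted Bae--Park algorithm will produce an arc presentation with at most $|F(D)|=c+e-V+2$ pages. This bound is simply false once $V\ge 3$. Take the trivially embedded $K_4$: here $V=4$, $e=6$, $c=0$, so $|F(D)|=4$, yet every edge requires at least one arc (an arc lies in a single page and its interior misses the binding axis, hence lies in a single edge), forcing $\alpha(K_4)\ge 6$. In fact $\alpha(K_4)=6$, matching the theorem's bound $c+e+b=6$ but contradicting yours.

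What actually happens in the paper's spoking algorithm is that the invariant quantity along the sequence of graph-spoke diagrams is not the region count alone but the sum $(\text{spokes})+(\text{regions})+(\text{vertices})$: a Type~1 move (absorbing a crossing) changes nothing, while a Type~2 move (absorbing a graph vertex) kills one vertex and creates one region. Starting with $V$ vertices and $|F(D)|$ regions and ending with $1$ vertex, $1$ region, and only spokes, one gets $V+|F(D)|-2$ spokes in the non-bouquet case and $V+|F(D)|-1$ in the bouquet case (the final Type~3 move costs one extra). Plugging in $|F(D)|=c+e-V+2$ gives exactly $c+e$ and $c+e+1$ respectively, which is the bound you want. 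So the fix is not to aim for ``pages $\le$ regions'' but for ``pages $\le$ vertices $+$ regions $-2$''; the extra $+V$ you were implicitly discarding is exactly what makes the bound tight at graphs like $K_4$. Your acknowledged ``main obstacle'' is real, but the resolution is a different count, not merely a technical adaptation at high-valence vertices.
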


We remark that this theorem induces Bae and Park's upper bound for a knot or 
a non-splittable 2-component link,
by applying that it consists of exactly one edge (and so is itself a bouquet cut-component)  
or two edges (without a bouquet cut-component), respectively.

\section{Graph-spoke diagram} \label{sec:graphspoke}

Let $G$ be a spatial graph.
Consider an arc-presentation of $G$ with $n$ pages whose binding is the $z$-axis.
We project this arc-presentation onto the $xy$-plane to get a wheel with $n$ spokes. 
Then the central point of the wheel corresponds to the binding axis 
and each spoke corresponds to an arc on a page. 
Assign the pair of numbers $\{i, j\}$ to each spoke
so that the corresponding arc connects the two different points numbered $i$ and $j$ on the binding. 
The result is called a {\em spoke diagram\/} of $G$.
See Figure~\ref{fig:spokediagram}.
Conversely, if $G$ admits a spoke diagram with $n$ spokes, 
then it also admits an arc-presentation into an open-book with $n$ pages.

\begin{figure}[h]
\includegraphics{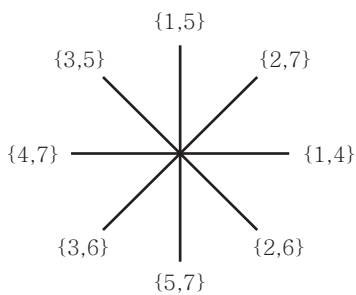}
\caption{Spoke diagram of the arc presentation of $5_1$ in Figure~\ref{fig:arcpresentation}}
\label{fig:spokediagram} 
\end{figure}

Let $D_0$ be a regular graph diagram of $G$ with vertices and under/over crossings.
In Section~\ref{sec:spoking}, 
we will convert $D_0$ to its spoke diagram by means of an algorithm
which we will call the {\em spoking\/} algorithm.
Roughly speaking, we select a vertex $v_0$ of $D_0$, not a crossing, and 
consider the vertical line through $v_0$ as the binding for the desired arc presentation. 
We call $v_0$ the {\em pivot\/} vertex.
Intermediate stages involve moves of the spatial graph 
which increase the number of the strands crossing the vertical line,
while reducing the number of crossings and vertices, other than $v_0$.
Then, the intermediate diagram consists of a regular graph diagram except near $v_0$
and spokes incident to $v_0$.
The end of each edge incident to $v_0$ is labeled by a number to indicate the corresponding level 
at which it meets the vertical line.
A spoke which represents an arc on a page
is labeled by two different numbers to show the levels of both end-points of the arc.
This intermediate diagram is called a {\em graph-spoke diagram\/} of $G$.
Figure~\ref{fig:graphspoke} shows how a graph-spoke diagram is related to the original spatial graph.

\begin{figure}[h]
\includegraphics{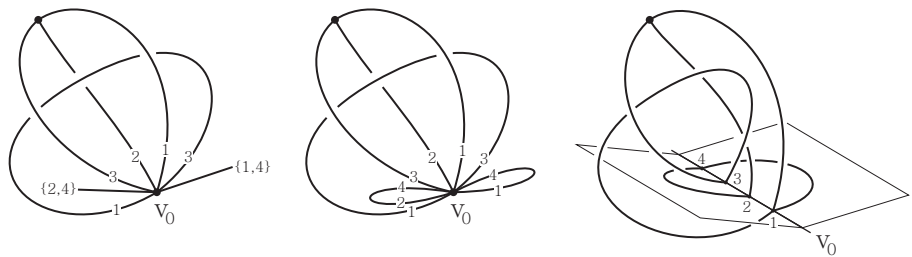}
\caption{Graph-spoke diagram and the related spatial graph}
\label{fig:graphspoke} 
\end{figure}

A plane graph means a graph embedded into $\mathbb{R}^2$.
In a plane graph, a loop is said to be innermost 
if at least one of its two complementary regions does not meet the graph.
A vertex is called a cut vertex if the plane graph can be split into more components by cutting at the vertex.
Now, consider a graph-spoke diagram $D$.
Let $\overline{D}$ denote the plane graph obtained by identifying under and over crossings of $D$, 
called the {\em underlying graph\/}.
A loop of $D$ based at the pivot vertex $v_0$ is called {\em innermost\/} 
if it is an innermost loop in the underlying graph $\overline{D}$.
A graph-spoke diagram is called {\em cut-point free\/} 
if its underlying graph after deleting all spokes has no cut vertex.

\section{Spoking algorithm} \label{sec:spoking}

Any spatial graph can be converted to a spoke diagram by means of the following 
algorithmic construction which is called the {\em spoking algorithm\/}.
This algorithm mainly follows the main argument in Bae and Park's paper~\cite{BP}
with modification to spatial graphs.

Let $G$ be a spatial graph and $D_0$ be a regular graph diagram of~$G$.
Select a {\em pivot\/} vertex $v_0$ of $D_0$, not a crossing, which will eventually represent the binding.
We now construct a sequence, in the spoking algorithm,
$$D_0 \rightarrow D_1 \rightarrow D_2 \rightarrow \cdots \rightarrow D_n$$
of graph-spoke diagrams so that the last $D_n$ is the desired spoke diagram for an arc presentation of $G$.
Here, $D_0$ is indeed a graph-spoke diagram having no spoke,
and all edges of $D_0$ near $v_0$ must be labeled by the same level, say $1$.

An intermediate graph-spoke diagram $D_m$ with its underlying graph $\overline{D}_m$ is given.
Select an edge $e$ of $\overline{D}_m$ joining $v_0$ (with level $i$ in $D_m$) and 
$v$ which was either a crossing or a vertex of $D_m$. We call $e$ a {\em pulling\/} edge.
We will construct a new graph-spoke diagram $D_{m+1}$ 
in three different ways according to the following three types;
\begin{itemize}
\item (Type 1) The vertex $v$ comes from a crossing of $D_m$.
\item (Type 2) The vertex $v$ comes from a vertex of $D_m$ which is not $v_0$.
\item (Type 3) The pulling edge $e$ is a loop, i.e., $v = v_0$.
\end{itemize}
We need to make sure that for each type the spatial graph representing $D_{m+1}$ remains 
the same as that of $D_m$.

For Type~1, we move a small portion of $D_m$ near $v$ along the edge $e$ 
as illustrated in Figure~\ref{fig:type1}.
The other three edges of $\overline{D}_m$ incident to $v$ clockwise from $e$
are denoted by $e'_1$, $e'$ and $e'_2$.
Assume that the strand of $D_m$ corresponding to $e'_1 \cup e'_2$
over-crosses (resp. under-crosses) the strand corresponding to $e \cup e'$.
We contract the edge $e$ and move the other three edges near $v$ along $e$.
Then, assign the same level $i$ to $e'$ near $v_0$, 
and to $e'_1$ and $e'_2$ a number $k$ which is larger (resp. smaller) than 
any current assigned numbers near $v_0$
so that these moved edges $e'_1$ and $e'_2$ are incident to the binding 
at a level above (resp. below) any of the current spokes or edges meeting $v_0$.
The result is a new graph-spoke diagram, say $D'$.

If $e'_1$, similarly for $e'_2$, in $D'$ is a loop based at $v_0$ as in the second part of Figure~\ref{fig:type1},
we can slide $e'_1$ all the way above (resp. below) the portion of $D'$ in the inside region of $e'_1$
without changing the spatial graph type.
This is because the half portion of $e'_1$ meeting the binding at level $k$
can be viewed as lying above (resp. below) all the rest of the graph.
This movement is called {\em sliding\/},
and the resulting graph-spoke diagram has the new innermost loop $e'_1$.

Now we replace $e'_1$ by a spoke labeled by $\{j, k\}$
where $j$ is the level of $e'_1$ at the other end.
This procedure is called {\em spoking\/},
and the resulting graph-spoke diagram is denoted by $D_{m+1}$.

\begin{figure}[h]
\includegraphics{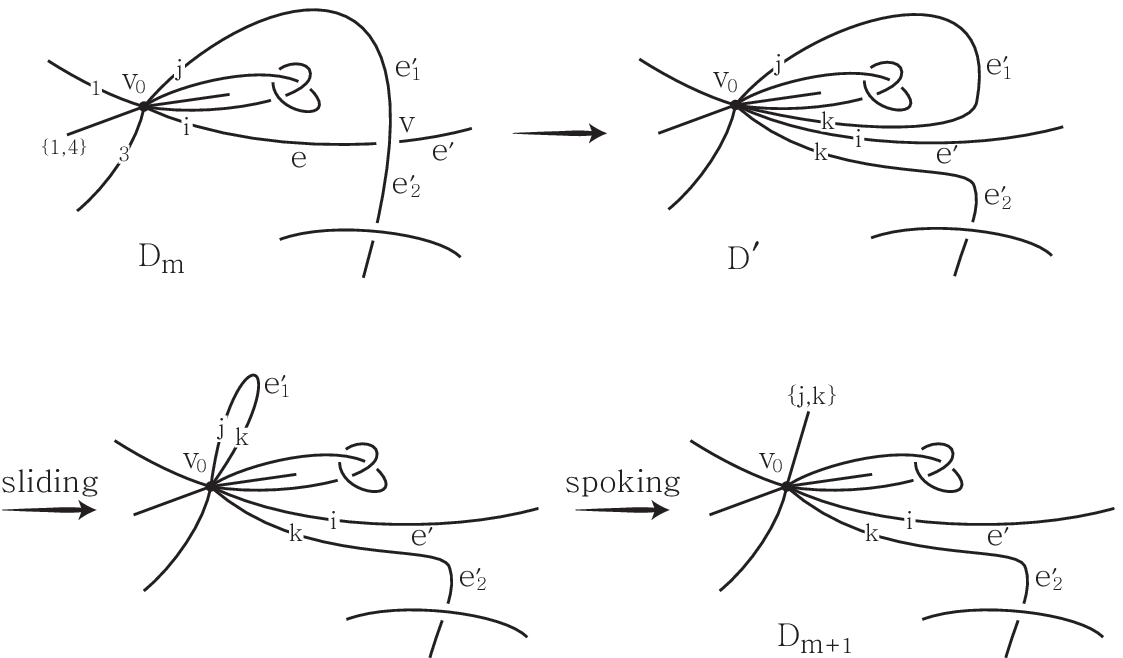}
\caption{Type 1}
\label{fig:type1} 
\end{figure}

For Type~2, we similarly move a portion of $D_m$ along the edge $e$ as illustrated in Figure~\ref{fig:type2}.
The other edges of $\overline{D}_m$ incident to $v$ clockwise from $e$ are denoted by $e_1, \dots, e_p$.
We move the vertex $v$ to $v_0$ along the edge $e$
so that the edges $e, e_1, \dots, e_p$ are incident to $v_0$,
and assign to the newly attached end-points of these edges to $v_0$ a number $k$ 
which is larger than any current assigned numbers near $v_0$.
Note that some of them are loops based at $v_0$, 
and these loops can be made innermost by sliding,
and become spokes by spoking as in Type~1.
$D_{m+1}$ denotes the resulting graph-spoke diagram.

\begin{figure}[h]
\includegraphics{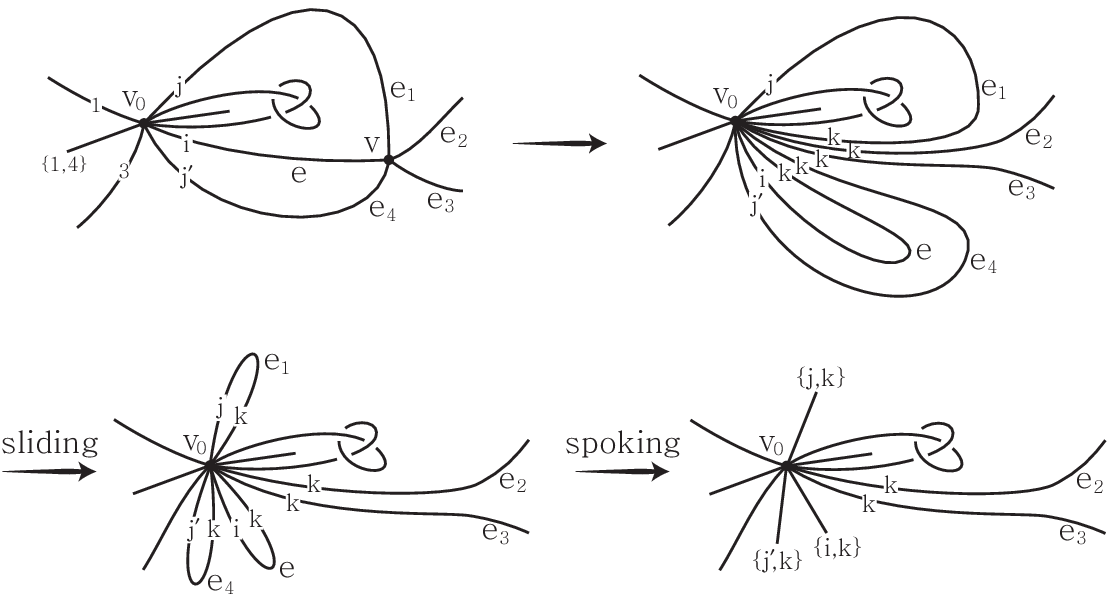}
\caption{Type 2}
\label{fig:type2} 
\end{figure}

For Type~3, we simply move the edge $e$ as illustrated in Figure~\ref{fig:type3}
so that a middle point in $e$ goes to $v_0$.
In this case we get two loops based at $v_0$,
and similarly assign to the newly attached end-points of both loops to $v_0$ a number $k$ 
which is larger than any current assigned numbers near $v_0$.
Both loops can be made innermost by sliding, and become two spokes by spoking.
$D_{m+1}$ denotes the resulting graph-spoke diagram.

\begin{figure}[h]
\includegraphics{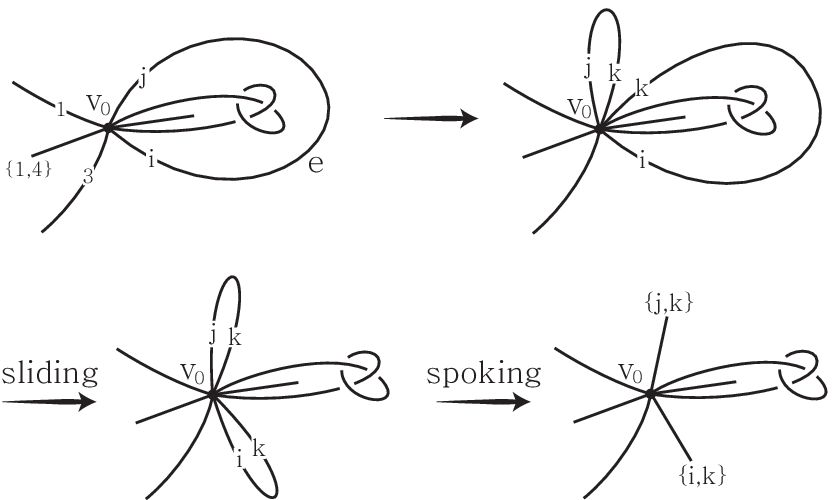}
\caption{Type 3}
\label{fig:type3} 
\end{figure}

We repeat this procedure until there are no edges, but all spokes.
Then, the resulting graph-spoke diagram $D_n$ is indeed a spoke diagram.
Note that we need to re-assign level $i$'s by $1,2,\dots$ in the order of their heights.

\begin{proposition} \label{prop:unchanged}
Suppose that $D_m$ and $D_{m+1}$ are graph-spoke diagrams 
successively appearing in the spoking algorithm.
Then the sum of the numbers of spokes, regions and vertices of $D_{m+1}$ is 
the same as that of $D_m$ for the Type 1 and 2 cases, while greater by 1 for the Type 3 case.
\end{proposition}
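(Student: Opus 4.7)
My plan is to analyze the sliding, spoking, and ``move'' portions of the three types separately, then combine them. A key preliminary observation is that the spatial graph $G$ is preserved throughout the spoking algorithm, so the vertex count $V$ of $G$ is constant; hence any change in the sum $V+R+S$ must come from changes in $R$ or $S$. The sliding step is a planar isotopy and preserves all three counts. The spoking step converts an innermost loop $\ell$ at $v_0$ into a spoke: in the underlying plane graph $\overline{D}_m$ this deletes one edge together with the bounded face of $\ell$ (so $\Delta R = -1$) while adding one spoke ($\Delta S = +1$), so the contributions cancel and spoking preserves $V+R+S$. It therefore suffices to verify the claim for the move portion of each type.

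For Types~1 and~2, the move contracts the pulling edge $e$. In Type~1 (Fig.~\ref{fig:type1}) the crossing $v$ and the edge $e$ are removed from $\overline{D}_m$ and $e'_1, e', e'_2$ are reattached at $v_0$ at the appropriate levels; in Type~2 (Fig.~\ref{fig:type2}) the analogous statement holds with the non-pivot vertex $v$ of degree $p+1$ replacing the crossing and its $p$ remaining edges replacing $e'_1, e', e'_2$. In each case the contraction of a non-loop edge preserves the connectivity of $\overline{D}_m$, so Euler's formula applies; the simultaneous decrease by one in both the vertex count and the edge count of $\overline{D}_m$ forces $\Delta R = 0$. With $V$ constant and $\Delta S = 0$, we conclude $\Delta(V+R+S) = 0$ for Types~1 and~2.

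For Type~3 (Fig.~\ref{fig:type3}), the move pulls the midpoint of the loop $e$ at $v_0$ to a new level, splitting $e$ into two loops based at $v_0$. This adds one edge to $\overline{D}_m$ without changing its vertex count, so Euler's formula gives $\Delta R = +1$. Combined with the constancy of $V$, with $\Delta S = 0$ for the move itself, and with the count-neutrality of the subsequent spokings of the two new loops, we obtain $\Delta(V+R+S) = +1$, as claimed. The main subtlety to handle carefully is the verification that $\overline{D}_m$ remains connected throughout, so that the simple form $V-E+R = 2$ of Euler's formula applies directly in each case without any correction for multiple components.
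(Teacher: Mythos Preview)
Your argument contains a genuine error in the handling of Type~2, rooted in a misreading of what ``vertices of $D_m$'' means in the proposition. You take $V$ to be the vertex count of the abstract spatial graph $G$, which is indeed invariant under ambient isotopy; but the proposition (and its later application in the proof of the main theorem, where $D_0$ has $v(G_s)$ vertices while the final spoke diagram $D_n$ has exactly one) is counting the vertices of the graph-spoke \emph{diagram}: the spatial-graph vertices still drawn as separate points, together with the single pivot $v_0$ standing for the entire binding axis. In a Type~2 move the vertex $v$ is absorbed into $v_0$, so $\Delta V=-1$, not $0$.

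This is compounded by a misdescription of the Type~2 move itself. The pulling edge $e$ is \emph{not} contracted: the vertex $v$ is slid to the location of $v_0$ and the edges $e,e_1,\dots,e_p$ are all reattached there at a new level $k$, so $e$ becomes a loop at $v_0$ bounding a new region (hence $\Delta R=+1$ for the move, not $0$), and this loop is then spoked along with any other newly created loops. Your two mistakes, $\Delta V=0$ instead of $-1$ and $\Delta R=0$ instead of $+1$, happen to cancel, which is why your final tally for Type~2 comes out right; but the intermediate reasoning and the implicit description of the resulting diagram are incorrect (in particular, under your reading $e$ would vanish rather than become a spoke labelled $\{i,k\}$). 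The paper's own proof records precisely this accounting: in Type~2 the move ``erases one vertex $v$ and creates one region bounded by $e$.'' Your treatment of Types~1 and~3, and of the sliding and spoking steps, is fine.
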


Here, the regions of a graph-spoke diagram mean the regions of its underlying plane graph.

\begin{proof}
For the Type 1 case, moving the crossing $v$ to the pivot vertex $v_0$ changes 
nothing of the numbers of spokes, regions and vertices.
For the Type 2 case, moving the vertex $v$ to $v_0$ erases one vertex $v$ 
and creates one region bounded by $e$.
For the Type 3 case, moving the middle point of the pulling edge $e$ to $v_0$ only creates one more extra region.

Furthermore, in all cases, sliding does not change the numbers of spokes, regions and vertices,
while spoking erases some regions and creates the same number of spokes.
\end{proof}

\section{Proof of Theorem~\ref{thm:main}} \label{sec:proof}

In this section, we prove the main theorem.

\begin{proof}[Proof of Theorem~\ref{thm:main}.]
Let $G$ be a given spatial graph with the cut-component decomposition $G_1 \cup \cdots \cup G_t$.
Suppose that each $G_s$, $s=1,\dots,t$, consists of $e(G_s)$ edges
so that the total number of edges of $G$ is $e=\sum_{s=1}^t e(G_s)$.
Note that $c(G) = \sum_{s=1}^t c(G_s)$.
Let $b$ denote the number of bouquet cut-components among $G_s$'s.

Now consider a cut-component $G_s$ and its reduced regular graph diagram $D_0$ with $c(G_s)$ crossings.
As in knot theory, a diagram is called reduced if it does not have a nugatory crossing
(which separates the projection into two disjoint parts).
Obviously $D_0$ is cut-point free.
Select one pivot vertex $v_0$ of $D_0$.
We now build a special sequence of the spoking algorithm
\begin{equation} \label{eq1}
D_0 \rightarrow D_1 \rightarrow D_2 \rightarrow \cdots \rightarrow D_n  \tag{$\ast$}
\end{equation}
so that every intermediate graph-spoke diagram $D_m$ is cut-point free.
The following claim which is a generalization of~\cite[Lemma 1]{BP} to spatial graphs is crucial in the proof.
Note that, in the original lemma, 
they assumed that the underlying graph has only 4-valent vertices except the pivot vertex
due to the property of link diagrams.
$D_m$ is called a {\em bouquet diagram\/} if $D_m$, after ignoring all spokes, is a diagram of a bouquet graph.

\begin{claim} \label{claim:cutpointfree}
Suppose that $D_m$ is cut-point free.
Then we can always choose a pulling edge $e$ in its underlying graph $\overline{D}_m$ incident to $v_0$ 
so that $D_{m+1}$ as in the next step in the spoking algorithm is also cut-point free and furthermore, 
if $D_m$ is not a bouquet diagram, $D_{m+1}$ is not a bouquet diagram.
\end{claim}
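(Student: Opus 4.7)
The plan is to adapt the proof of Bae and Park's Lemma~1 in~\cite{BP} to the spatial graph setting, where the new feature is the presence of non-crossing vertices of arbitrary valence that force us to handle Type~2 moves and more complicated loop configurations. Write $H_m$ for the plane graph obtained from $\overline{D}_m$ by erasing all spokes; by hypothesis $H_m$ has no cut vertex.

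First I would locate the pulling edge $e$ by examining $H_m$ around $v_0$. If $H_m$ contains a loop based at $v_0$ that is innermost in the sense defined in Section~\ref{sec:graphspoke}, I would take it as $e$ and apply the Type~3 move; this case is essentially what appears in~\cite{BP}. Otherwise I would pick an edge $e$ incident to $v_0$ that together with part of the boundary of some face $F$ incident to $v_0$ cuts off an empty region, meaning one containing no edge of $H_m$ in its interior. Such an edge always exists by choosing $F$ so as to minimize the number of edges of $H_m$ strictly inside the disk it bounds with $e$. Depending on whether the other endpoint $v$ of $e$ is a crossing or a non-crossing vertex, this choice triggers a Type~1 or Type~2 move.

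Next I would verify that $D_{m+1}$ is cut-point free. Each of the three moves acts locally around $e$: the vertex $v$ is merged into $v_0$, the remaining edges of $H_m$ incident to $v$ are rerouted along $e$, and every innermost loop at $v_0$ that appears during the process is subsequently removed by spoking. Because $e$ and part of $\partial F$ bound an empty region, the move is combinatorially equivalent to contracting $e$ in $H_m$ after a planar isotopy. Contraction of a non-loop edge in a cut-point free plane graph can only create a cut vertex when two substructures attached at the endpoints of $e$ share no other connection, which is precisely excluded by the emptiness of the chosen region. The bouquet clause then follows because the only way $D_{m+1}$ becomes a bouquet diagram from a non-bouquet $D_m$ is to annihilate the last crossing and the last non-pivot vertex in a single move; the flexibility of selecting $F$ among several innermost candidates lets us avoid this coincidence whenever $D_m$ is not already a bouquet diagram.

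The main obstacle is the Type~2 case, in which the pulled vertex $v$ may have large valence and all of its other incident edges $e_1,\dots,e_p$ are dragged at once to $v_0$, after which several may become innermost loops and be spoked in succession. We must certify that this cascade cannot disconnect the resulting underlying graph. The crucial point is again that $e$ was chosen to bound an empty region with $\partial F$: the rerouted edges enter $v_0$ consecutively in a single angular sector, so every cycle of $H_m$ through $v$ translates into a cycle of the new underlying graph through $v_0$, and no cut vertex can appear. Making this geometric argument rigorous — especially tracking the interaction between successive slidings and the positions of the newly created loops, and treating the degenerate situation in which $v$ is the only non-pivot vertex present — is where the bulk of the case analysis will lie.
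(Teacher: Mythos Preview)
Your proposal has a genuine gap at the central step: the selection of the pulling edge. The criterion you give---``an edge $e$ incident to $v_0$ that together with part of the boundary of some face $F$ incident to $v_0$ cuts off an empty region''---is not well-defined. Every edge incident to $v_0$ already lies on the boundary of two faces incident to $v_0$, so if the ``empty region'' is a face the condition is vacuous, and if it is meant to be something else you have not said what disk is being bounded or minimized over. More importantly, even under a charitable reading, the assertion that contracting such an $e$ cannot create a cut vertex (``two substructures attached at the endpoints of $e$ share no other connection, which is precisely excluded by the emptiness of the chosen region'') is exactly the point at issue: a cut vertex appears at $v_0$ after the move precisely when $\{v_0,v\}$ is a separating pair of $H_m$, and the emptiness of one adjacent region says nothing about how the rest of the graph hangs off $v_0$ and $v$. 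The bouquet clause is handled the same way---``the flexibility of selecting $F$ among several innermost candidates lets us avoid this coincidence'' is an expectation, not an argument.

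The paper does not try to recognise a good edge by a local geometric test. Instead it lists the non-spoke edges at $v_0$ as $e_1,\dots,e_d$ in cyclic order, assumes $D_{e_1}$ fails, and passes to the dual graph $\overline{D}^*$: the failure forces the simple cycle $\partial v_1^*$ to meet $\partial v_0^*$ in the dual edge $e_1^*$ and also in a dual vertex $R_1^*$ sitting between some $e_k^*$ and $e_{k+1}^*$ with $2\le k\le d-1$. Repeating with $e_2,e_3,\dots$ traps the corresponding index strictly between the step number $i$ and $k$, so the process cannot fail for all of $e_1,\dots,e_k$; some $D_{e_j}$ is cut-point free. For the bouquet clause the paper starts this scan at an edge $e_1$ that avoids a fixed non-pivot vertex $v$, and observes that the successful $e_j$ then lies in the block of $\overline{D}$ not containing $v$, so $v$ survives in $D_{m+1}$. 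This dual-graph pigeonhole is the real content of Bae--Park's Lemma~1 and is what your outline needs to either reproduce or replace by an argument that genuinely controls separating pairs at $v_0$.
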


\begin{proof}
Suppose that $D_m$, denoted by $D$ for simplicity, is cut-point free.
Select an edge $e_1=v_0 v_1$ of $\overline{D}$.
Let $D_{e_1}$ be the next graph-spoke diagram obtained from $D$ 
by applying one step of the spoking algorithm along the pulling edge $e_1$.
If $\overline{D}$ contains a loop based at $v_0$,
it must consist of one loop $e_1$ and spokes only since $D$ is cut-point free.
By proceeding with the Type 3 move along $e_1$, 
$D_{e_1}$ is indeed the final spoke diagram $D_n$, so we are done.

We now assume that $\overline{D}$ contains no loops, so $v_0 \neq v_1$.
Suppose that $D_{e_1}$ is not cut-point free.
In the underlying graph $\overline{D}_{e_1}$ after deleting all spokes, $v_0$ must become a cut vertex.
By a suitable choice of the unbounded region $R_1$ of $\overline{D}_{e_1}$,
we have the right picture of Figure~\ref{fig:cutpoint} 
where $R_1$ splits $\overline{D}_{e_1}$ into two portions without loops as drawn in shaded areas.
Note that, if one portion contains a loop of $\overline{D}_{e_1}$,  
then this loop comes from a loop based at either $v_0$ or $v_1$ in $\overline{D}$,
a contradiction to the fact that $D$ is cut-point free.
Now we may say that the shape of $\overline{D}$ near $e_1$ looks like the left picture.
In $\overline{D}$, after ignoring all spokes,
let $e_2, \dots, e_d$ denote the edges incident to $v_0$ appeared in clockwise next to $e_1$.
Eventually, in $D_{e_1}$, $v_1$ moves to $v_0$ along $e_1$, 
$e_1$ disappears or becomes a spoke depending on 
whether $v_1$ comes from a crossing or a vertex of $D$, 
and edges incident to $v_1$ among $e_2, \dots, e_d$ become spokes.

\begin{figure}[h]
\includegraphics{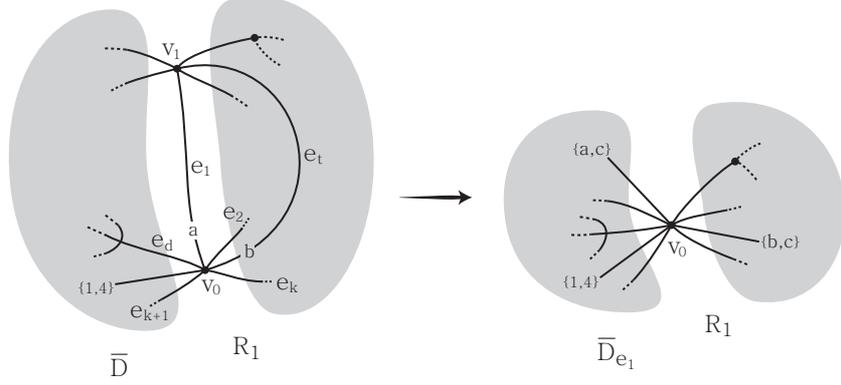}
\caption{Non cut-point free case of $D_{e_1}$}
\label{fig:cutpoint} 
\end{figure}

Now we consider the dual graph $\overline{D}^*$ of $\overline{D}$ after ignoring all spokes.
For a vertex $w$ of $\overline{D}$, 
let $w^*$ denote the region of the dual graph $\overline{D}^*$ corresponding to $w$.
Suppose that the boundary cycle of $v_0^*$ is $\partial v_0^* = e_1^* e_2^* \cdots e_d^*$,
where each $e_i^*$ denotes the edge of $\overline{D}^*$ corresponding to $e_i$ as in Figure~\ref{fig:dual}. 
Notice that the boundary cycles $\partial v_0^*$ and $\partial v_1^*$ are simple closed curves
because neither $v_0$ nor $v_1$ is a cut vertex,
and share the dual edge $e_1^*$, the dual vertex $R_1^*$ of the unbounded region $R_1$,
and also all dual edges $e_t^*$'s where $e_t$'s are incident to $v_1$ as in the figure.
Since $\partial v_0^*$ is simple, there are only two consecutive edges $e_k^*$ and $e_{k+1}^*$
linked at $R_1^*$.
Then $2 \leq k \leq d \! - \! 1$ since otherwise $v_1$ is a cut vertex of $\overline{D}$.

\begin{figure}[h]
\includegraphics{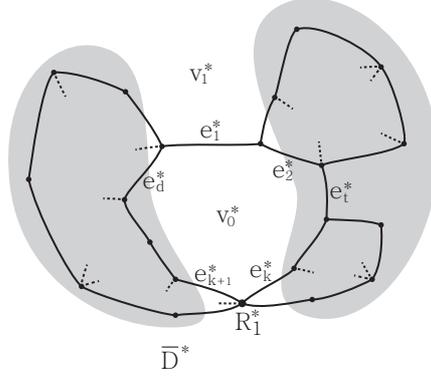}
\caption{Dual graph $\overline{D}^*$}
\label{fig:dual} 
\end{figure}

Suppose that the graph-spoke diagram $D_{e_2}$ obtained from $D$
by using another pulling edge $e_2=v_0 v_2$ is not cut-point free.
By the same argument as above, 
another unbounded region $R_2$ of $\overline{D}_{e_2}$ can be suitably chosen so that 
$\partial v_0^*$ and a simple closed curve $\partial v_2^*$ in $\overline{D}^*$ share 
$e_2^*$ and $R_2^*$ with two consecutive edges $e_{k'}^*$ and $e_{k'+1}^*$
on $\partial v_0^*$ linked at $R_2^*$ for some $k' = 3, \dots, k$.

If all $D_{e_1}, \dots, D_{e_i}$, $i=1,\dots,k$, are not cut-point free,
then the inductively chosen vertex $R_i^*$ on $\partial v_0^*$ is the common vertex of 
two consecutive edges $e_{k''}^*$ and $e_{k''+1}^*$ with $i < k'' \leq k$.
This implies that $i < k$, so at least one, say $D_{e_j}$, of $D_{e_1}, \dots, D_{e_k}$ is cut-point free.
The resulting graph-spoke diagram $D_{e_j}$ is $D_{m+1}$ as we desired.

Furthermore, suppose that $D$ is not a bouquet diagram.
Thus $D$ has a vertex $v$ other than $v_0$.
If all edges of $\overline{D}$ incident to $v_0$ are also incident to $v$, 
then $D$, after ignoring all spokes, is indeed the trivial diagram of a $\theta_p$-curve 
(so this diagram has no crossings) since $D$ is cut-point free.
By proceeding with a Type 2 move along any edge $e$, 
$D_{e}$ is indeed the final spoke diagram $D_n$ which is not a bouquet diagram.

If there is an edge $e'$ of $\overline{D}$ incident to $v_0$ and away from $v$,
we apply the same argument as before by considering $e'$ as $e_1$.
As a result, we find an edge $e''$ (which is $e_j$ in the previous argument) 
so that the graph-spoke diagram $D_{e''}$ is cut-point free.
In particular, if $v$ lies in the left shaded area of $\overline{D}$ in Figure~\ref{fig:cutpoint},
then we find $e''$ lying on the right shaded area, or vice versa.
Therefore $D_{e''}$ still has $v$ as a vertex so that it is not a bouquet diagram.
\end{proof}

This claim guarantees that all intermediate $D_m$'s in the sequence~\eqref{eq1} are cut-point free
by suitably choosing a pulling edge at each step.
Let $v(G_s)$ denote the number of vertices of $G_s$ (or $D_0$),
and $r(D_0)$ the number of regions of $D_0$ (or $\overline{D}_0$).

We separate into two cases as to whether the cut-component $G_s$ is a bouquet or not. 
Suppose that $G_s$ is not a bouquet.
Then we further assume that all $D_m$'s are not bouquet diagrams.
This implies that each step in this spoking algorithm is related to a Type 1 or 2 move only.
We remark that the last step is a Type 2 move on $D_{n-1}$ which is a trivial diagram of a $\theta_p$-curve 
with spokes as in the last paragraph in the proof of Claim~\ref{claim:cutpointfree}.
Proposition~\ref{prop:unchanged} says that
all $D_m$'s have the same value of the sum of the numbers of their spokes, regions and vertices.
Therefore the number of spokes of the final spoke diagram $D_n$ is $v(G_s) + r(D_0) - 2$.
Note that $D_0$ has no spokes and $D_n$ has one vertex and one unbounded region.

If $G_s$ is a bouquet, all $D_m$'s are naturally bouquet diagrams.
In this case, each step in this spoking algorithm is related to the Type 1 move only,
except that the last step is a Type 3 move on $D_{n-1}$ which is a loop with spokes 
as in the first paragraph in the proof of Claim~\ref{claim:cutpointfree}.
By Proposition~\ref{prop:unchanged}, the number of spokes of $D_n$ is $v(G_s) + r(D_0) - 1$.

Remember that $G_s$ has $e(G_s)$ edges and its diagram $D_0$ has $c(G_s)$ crossings.
Then the underlying graph $\overline{D}_0$ must have
$v(G_s) + c(G_s)$ vertices and $e(G_s) + 2 c(G_s)$ edges.
By the Euler characteristic equation, 
$r(D_0) - (e(G_s) + 2 c(G_s)) + (v(G_s) + c(G_s)) =2$.
So we have,
$ v(G_s) + r(D_0) = c(G_s) + e(G_s) + 2$.

Therefore, for each non-bouquet cut-component, $\alpha(G_s) \leq c(G_s) + e(G_s)$,
while for a bouquet cut-component, $\alpha(G_s) \leq c(G_s) + e(G_s) + 1$.

Now we will combine all cut-components into $G$.

\begin{claim} \label{claim:combine}
Suppose that a spatial graph $G$ can be split into two spatial graphs $H$ and $H'$
by a 2-sphere disjoint from $G$ or meeting $G$ only at a vertex.
Then,
$$ \alpha(G) = \alpha(H) + \alpha(H'). $$
\end{claim}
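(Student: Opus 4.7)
The plan is to establish the two inequalities $\alpha(G) \le \alpha(H) + \alpha(H')$ and $\alpha(G) \ge \alpha(H) + \alpha(H')$ separately.

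For the upper bound, I would begin with optimal arc presentations $P_H$ and $P_{H'}$ realizing $\alpha(H)$ and $\alpha(H')$ arcs respectively, and combine them on a common open-book. If the separating sphere is a splitting sphere, I would rescale and translate the two presentations so that $P_H$ lies in the half-space $\{z > 0\}$ and $P_{H'}$ lies in $\{z < 0\}$, and assign their pages to two pairwise disjoint angular ranges. Since $H$ and $H'$ are disjoint, the union is an arc presentation of $G = H \sqcup H'$ with $\alpha(H) + \alpha(H')$ arcs. If the sphere is a cut-sphere meeting $G$ at a vertex $v$, I would instead rescale and translate both presentations so that their copies of $v$ coincide at one common level on the binding, still assigning the pages to disjoint angular ranges. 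The two embeddings then meet only at $v$, yielding an arc presentation of $G = H \cup_v H'$ with $\alpha(H) + \alpha(H')$ arcs. In either case the resulting spatial graph is isotopic to $G$, since both disjoint union and vertex-sum of spatial graphs are well-defined on isotopy classes.

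For the lower bound, I would fix an optimal arc presentation of $G$ with $\alpha(G)$ arcs, hence $\alpha(G)$ pages. Because $H \cap H'$ is either empty or a single vertex, every edge of $G$ lies entirely in $H$ or entirely in $H'$, so each arc (being a sub-arc of an edge) can be classified as an $H$-arc or an $H'$-arc. Let $a$ and $b$ be their numbers, so that $a + b = \alpha(G)$. Deleting the $H'$-arcs leaves an arc presentation of $H$ with $a$ arcs: the remaining arcs are disjoint simple arcs on distinct pages meeting the binding only at their endpoints, and the binding still contains every vertex of $H$ (the cut vertex $v$, when present, is already on $G$'s binding). Hence $\alpha(H) \le a$, and symmetrically $\alpha(H') \le b$, which together give $\alpha(H) + \alpha(H') \le \alpha(G)$.

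Combining the two inequalities gives the claimed equality. There is no substantial obstacle; the only point that deserves a brief check is that in the cut-sphere upper bound construction, aligning the common vertex and separating the pages does produce an embedding isotopic to $G$, which follows from the fact that the vertex-sum operation depends only on the isotopy classes of the two summands.
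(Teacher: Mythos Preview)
Your argument is correct and mirrors the paper's: both inequalities are obtained by combining, respectively restricting, optimal arc presentations, just as you outline. The only cosmetic difference is in the cut-sphere upper bound, where the paper first rotates $v$ to an extremal binding level in each summand and then stacks the two presentations vertically, whereas you align $v$ at a common level and separate the pages into disjoint angular sectors; either construction yields an arc presentation with $\alpha(H)+\alpha(H')$ pages.
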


\begin{proof}
If the splitting 2-sphere is disjoint from $G$, this equality is obvious.

Suppose that $H$ and $H'$ meet only at a vertex $v$. 
Consider their arc presentations with $\alpha(H)$ and $\alpha(H')$ pages each.
As in the arc presentation theory for knot, without changing the type of spatial graphs,
we always rotate binding points upward or downward through the infinity level 
so that any given binding point goes to the lowest or highest level.
Therefore we assume that 
the arc presentation of $H$ has the vertex $v$ at the lowest level,
while the arc presentation of $H$ has it at the highest level.
Just attach them at $v$ as in Figure~\ref{fig:union}
to build an arc presentation of $G$ with $\alpha(H) + \alpha(H')$ pages. 
So the inequality $\alpha(G) \leq \alpha(H) + \alpha(H')$ holds.

\begin{figure}[h]
\includegraphics{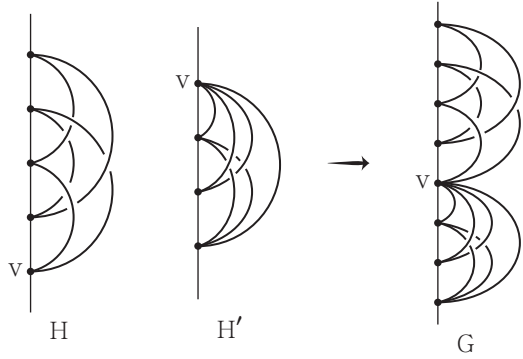}
\caption{Attaching two arc presentations of spatial graphs}
\label{fig:union}
\end{figure}

On the other hand, consider an arc presentation of $G$ with $\alpha(G)$ pages.
If we delete all arcs in this arc presentation related to $H'$ (or $H$),
the result is still an arc presentation of $H$ ($H'$ respectively).
Thus the inequality $\alpha(G) \geq \alpha(H) + \alpha(H')$ must be satisfied.
\end{proof}

This claim guarantees that 
$$ \alpha(G) = \sum_{s=1}^t \alpha(G_s) \leq \sum_{s=1}^t \big( c(G_s) + e(G_s) \big) + b
= c(G) + e + b. $$

Furthermore, there are many examples ensuring that this upper bound is lowest possible.
As stated in Theorem~\ref{thm:bp},
any prime alternating knot $K$, a bouquet with one edge, has arc index $\alpha(K) = c(K)+2$. 
So we cannot reduce the power and the coefficient of the term $c(K)$.
Also the trivial $\theta_e$-curve $G$ with $e$ edges has $\alpha(G) = e$,
and the trivial $b$-bouquet graph $G'$ with $b$ loops has $\alpha(G') = 2b$.
This completes the proof.
\end{proof}

\end{document}